\newtheorem{theo+}              {Theorem}           [section]
\newtheorem{prop+}  [theo+]     {Proposition}
\newtheorem{coro+}  [theo+]     {Corollary}
\newtheorem{lemm+}  [theo+]     {Lemma}
\newtheorem{exam+}  [theo+]     {Example}
\newtheorem{rema+}  [theo+]     {Remark}
\newtheorem{defi+}  [theo+]     {Definition}
\newenvironment{theorem}{\begin{theo+}}{\end{theo+}}
\newenvironment{proposition}{\begin{prop+}}{\end{prop+}}
\newenvironment{corollary}{\begin{coro+}}{\end{coro+}}
\newenvironment{lemma}{\begin{lemm+}}{\end{lemm+}}
\newenvironment{definition}{\begin{defi+}}{\end{defi+}}
\theoremstyle{plain} \theoremstyle{remark}
\newtheorem{remark}{Remark}
\newtheorem{example}{Example}
\newtheorem*{ack}{\bf Acknowledgments}
\def \r{\mbox{${\mathbb R}$}}
\def\E{/\kern-1.0em \equiv }
\def\gh{generalized harmonic morphism}
\author{Elsa Ghandour$^{*}$}
\address{\hskip-\parindent
Laboratoire de Math\'ematiques de Bretagne Atlantique UMR 6205 \\
Universit\'e de Bretagne Occidentale, 
29238 Brest Cedex 3\\
France}
\email{Elsa.ghandour@univ-brest.fr}
\author{Ye-Lin Ou$^{**}$}
\address{\hskip-\parindent
Department of Mathematics, Texas A $\&$ M University-Commerce,
\newline Commerce,  TX 75429,  USA} 
\email{yelin.ou@tamuc.edu}
\thanks{$^{*}$The first author would like to thank the University of Brest and the University of Bretagne Loire for  scholarships which allowed her to visit  the Department of Mathematics at Texas A $\&$ M University-Commerce for two months in the Fall of 2017. She is also grateful to Texas A $\&$ M University-Commerce and the Department of Mathematics for the hospitality she received during her visit there where this work was done. 
\newline\indent $^{**}$The second author is supported by a grant from the Simons Foundation ($\#427231$, Ye-Lin Ou)}
\date{12/08/2017}
\begin{document}
\title[Generalized harmonic morphisms and biharmonic maps]{Generalized harmonic morphisms and horizontally weakly conformal biharmonic maps}

\subjclass{58E20, 53C43} \keywords{Generalized harmonic morphisms, harmonic morphisms, biharmonic maps, biharmonic morphisms, horizontally weakly conformal maps.}

\maketitle

\maketitle
\section*{Abstract}
\begin{quote} 
{\footnotesize Harmonic morphisms are maps between Riemannian manifolds that pull back harmonic functions to harmonic functions. These maps are characterized  as horizontally weakly conformal harmonic maps and they have many interesting links and applications to several areas in mathematics (see the book \cite{BW1} by Baird and Wood for details). In this paper, we study generalized harmonic morphisms which are defined to be  maps between Riemannian manifolds that pull back  harmonic functions to  biharmonic functions. We obtain some characterizations of generalized harmonic morphisms into a Euclidean space and  give two methods of constructions that can be used to produce many examples of  generalized harmonic morphisms which are not  harmonic morphisms. We also give a complete classification of \gh s among the projections of a warped product space, which provides infinitely many examples of proper biharmonic Riemannian submersions and  conformal submersions from a warped product manifold.}
\end{quote}

\section{Introduction}

A harmonic morphism is a map $\phi: (M^m, g)\longrightarrow (N^n, h)$ between Riemannian  manifolds that preserves the solutions of the Laplace equation in the sense that it pulls back any local harmonic function on  $(N^n, h)$ to a local harmonic function on $(M^m, g)$. Such maps are characterized by Fuglede \cite{Fu} and Ishihara \cite{Is} independently as  harmonic maps which are also horizontally weakly conformal. Here, horizontally weakly conformal maps are generalizations of Riemannian submersions in the sense that at the point where $(\rm d \phi)_x\ne0$, $(\rm d \phi)_x$ preserves horizontal angles. This is equivalent to the existence of a function $\lambda$ on $M$ such that 
\begin{equation}
h( \rm d \phi_x(X), \rm d \phi_x(Y))=\lambda^2g(X, Y) 
\end{equation}
 for any horizontal vectors $X, Y$. We refer the readers to the book \cite{BW1} for a comprehensive account of the theory, applications and interesting links of harmonic morphisms.
 
Biharmonic morphisms, as a generalization of the notion of harmonic morphisms, were introduced and studied  in \cite{Ou1}, \cite{LO1} and \cite{LO2}.  These are maps between Riemannian manifolds which preserve the solutions of bi-Laplace equations in the sense that they pull back germs of biharmonic functions to germs of biharmonic functions. According to a characterization obtained in \cite{LO2}, a map between Riemannian manifolds is a biharmonic morphism if and only if it is a horizontally weakly conformal map which is also a biharmonic map, a $4$-harmonic map, and satisfies an additional equation. So biharmonic morphisms are a very restricted class of horizontally weakly conformal biharmonic maps. We would also like to point out that not every harmonic morphism is a biharmonic morphism though the  latter generalizes the notion of the former.

In this paper, we study maps between Riemannian manifolds that pull back local harmonic functions to local biharmonic functions. Such maps clearly include harmonic morphisms as a subclass since any harmonic morphism pulls back a harmonic function to a harmonic function which is alway a biharmonic function. So, we call this class of maps generalized harmonic morphisms. We give two characterizations of \gh s into Euclidean spaces and two methods of constructions to produce \gh s by using direct sum of given \gh s or by composing a given \gh\,   with a harmonic or a biharmonic morphism. Many examples of \gh s which are not harmonic morphisms are given. We also give a complete classification of \gh s among the projections of a warped product space, which provides infinitely many examples of proper biharmonic Riemannian submersions and  conformal submersions from a warped product manifold. Our study shows that  the \gh s are not only  a natural generalization of harmonic morphisms (see, e.g., Proposition \ref{R3} and Corollary \ref{CR3}) but  also a useful tool to construct  infinitely many horizontally weakly conformal proper biharmonic maps including biharmonic Riemannian submersions Theorem \ref{MT2} and Remark \ref{RM3}.

\section{Characterizations of generalized harmonic morphisms into Euclidean spaces}

\begin{definition}
A map $\phi: (M^m, g)\longrightarrow (N^n,h)$ between Riemannian  manifolds is called {\bf a generalized harmonic morphism}, if for any harmonic function $f:N^n\supseteq U\longrightarrow \r$ with $\phi^{-1}(U)=V$ non-empty, the function
$f\circ\phi: V\longrightarrow \r$ is a biharmonic function on $V\subseteq M$.
\end{definition}
\begin{remark}
It is clear from the definitions of harmonic morphism, biharmonic morphism, and \gh\,  that any harmonic morphism is a \gh\,  and any biharmonic morphism is also a \gh. So we have the following inclusion relations:
$$\{ {\rm harmonic\; morphisms}\}\;\subset\; \{ {\rm generalized\; harmonic\; morphisms}\}, {\rm and}$$
$$\{ {\rm biharmonic\; morphisms}\}\;\subset\;\{ {\rm generalized\; harmonic\; morphisms}\}. $$
\end{remark}

Now we are ready to prove the following characterizations of the generalized harmonic morphisms into Euclidean spaces.
\begin{theorem}\label{MT}
Let  $\phi: (M^m, g)\longrightarrow \r^n$  with $\phi(x)=(\phi^1(x),\phi^2(x),\cdots, \phi^n(x))$ be a map from a Riemannian manifold into a Euclidean space, then the following statements are equivalent:
\begin{itemize}
\item[(i)]  $\phi$ is a generalized harmonic morphism,
\item[(ii)] $\phi$ is a horizontally weakly conformal biharmonic map and $(\phi^{\alpha}+i\phi^{\beta})^2: (M^m, g)\longrightarrow \mathbb{C}$ is also a biharmonic map for any $\alpha\ne \beta=1,2,\cdots, n$,
\item[(iii)] there exists a function $\lambda: M \longrightarrow [0, \infty)$ such that
\begin{eqnarray}\label{CE}
 \Delta^{2} (f\circ\phi)  &=& \lambda^4(\Delta^2f)\circ\phi+2[\lambda^2\Delta \phi^{\alpha} +g (\nabla\lambda^2,\nabla\phi^{\alpha})](\partial_{\alpha}\Delta f)\circ \phi\\\notag
&&+[\Delta \lambda^2 +2g(\nabla\phi^1,\nabla\Delta\phi^1)+(\Delta\phi^1)^2](\Delta f)\circ \phi
\end{eqnarray}
\end{itemize}
\end{theorem}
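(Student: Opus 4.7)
My plan is to establish the equivalence by proving the chain $(iii)\Rightarrow(i)\Rightarrow(ii)\Rightarrow(iii)$. The common tool is the composition formula for the Laplacian, which since the target $\mathbb{R}^{n}$ has vanishing Christoffel symbols reads
\begin{equation*}
\Delta(f\circ\phi)=\sum_{\alpha}(\Delta\phi^{\alpha})(\partial_{\alpha}f)\circ\phi+\sum_{\alpha,\beta}g(\nabla\phi^{\alpha},\nabla\phi^{\beta})(\partial_{\alpha\beta}f)\circ\phi.
\end{equation*}
Applying $\Delta$ once more and re-using this formula on each composed factor expands $\Delta^{2}(f\circ\phi)$ as a sum grouped by the derivative order of $f$, with coefficients that are explicit polynomials in $\Delta^{2}\phi^{\alpha}$, $\Delta\phi^{\alpha}$, $\nabla\phi^{\alpha}$, $\nabla\Delta\phi^{\alpha}$ and $g(\nabla\phi^{\alpha},\nabla\phi^{\beta})$ together with its first two covariant derivatives. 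The implication $(iii)\Rightarrow(i)$ is then immediate: for harmonic $f$ we have $\Delta f=\partial_{\alpha}\Delta f=\Delta^{2}f=0$, so the right-hand side of (\ref{CE}) vanishes and $f\circ\phi$ is biharmonic.

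For $(ii)\Rightarrow(iii)$ I substitute the horizontal-weak-conformality identity $g(\nabla\phi^{\alpha},\nabla\phi^{\beta})=\lambda^{2}\delta_{\alpha\beta}$ into this general expansion. The fourth-order-in-$f$ part collapses to $\lambda^{4}(\Delta^{2}f)\circ\phi$; the third-order part, once the contributions from $\nabla\lambda^{2}$ are collected, simplifies to the middle bracket of (\ref{CE}); and the biharmonic-map condition $\Delta^{2}\phi^{\alpha}=0$ kills the first-order term. The remaining second-order coefficient, after symmetrizing in $\alpha,\beta$, has the form
$A_{\alpha\beta}=\Delta\phi^{\alpha}\Delta\phi^{\beta}+g(\nabla\phi^{\alpha},\nabla\Delta\phi^{\beta})+g(\nabla\phi^{\beta},\nabla\Delta\phi^{\alpha})+\Delta\lambda^{2}\,\delta_{\alpha\beta}$. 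The biharmonicity of $(\phi^{\alpha}+i\phi^{\beta})^{2}$ splits into biharmonicity of its imaginary part $2\phi^{\alpha}\phi^{\beta}$, which (using HWC and $\Delta^{2}\phi=0$) forces the off-diagonal entries $A_{\alpha\beta}$ with $\alpha\ne\beta$ to vanish, and biharmonicity of its real part $(\phi^{\alpha})^{2}-(\phi^{\beta})^{2}$, which forces the diagonal quantity $(\Delta\phi^{\alpha})^{2}+2g(\nabla\phi^{\alpha},\nabla\Delta\phi^{\alpha})$ to be $\alpha$-independent. Contracting with $(\partial_{\alpha\beta}f)\circ\phi$ and evaluating this common value at $\alpha=1$ recovers the last bracket of (\ref{CE}).

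Finally, $(i)\Rightarrow(ii)$ is the hardest direction and proceeds by the Fuglede--Ishihara style test-function technique. The harmonic choice $f=x^{\alpha}$ gives biharmonicity of each $\phi^{\alpha}$, so $\phi$ is a biharmonic map; the choice $f=(x^{\alpha}+ix^{\beta})^{2}$ with $\alpha\ne\beta$, whose real and imaginary parts are both harmonic, gives biharmonicity of $(\phi^{\alpha}+i\phi^{\beta})^{2}$. For HWC I inspect the coefficient of $(\partial_{\alpha\beta\gamma\delta}f)\circ\phi$ in the general expansion: after complete symmetrization it equals the symmetric part of $B_{\alpha\beta}B_{\gamma\delta}$ where $B_{\alpha\beta}:=g(\nabla\phi^{\alpha},\nabla\phi^{\beta})$. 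Letting the fourth jet of a harmonic polynomial $f$ at $\phi(x)$ range over all symmetric tensors with vanishing iterated metric traces (while killing its lower jets), condition $(i)$ forces $B_{(\alpha\beta}B_{\gamma\delta)}$ to be a pure-trace tensor $\delta_{(\alpha\beta}M_{\gamma\delta)}$ for some symmetric $M$. Diagonalizing the positive semi-definite $B$ at a point, comparison of the $(iiii)$ and $(iijj)$ components yields $B_{ii}^{2}=M_{ii}$ and $B_{ii}B_{jj}=(M_{ii}+M_{jj})/2$, whence $(B_{ii}-B_{jj})^{2}=0$ and $B=\lambda^{2}I$, which is HWC.

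The main obstacle I foresee is precisely this HWC step: cleanly isolating the purely fourth-order-in-$f$ coefficient from the many lower-order corrections generated by $\nabla B_{\alpha\beta}$ and $\Delta B_{\alpha\beta}$ in the iterated composition formula requires careful bookkeeping, and the final algebraic passage $B_{(\alpha\beta}B_{\gamma\delta)}=\delta_{(\alpha\beta}M_{\gamma\delta)}\Rightarrow B=\lambda^{2}I$ must also be justified at points where $B$ is only positive semi-definite rather than strictly positive (for instance, at critical points of $\phi$). Once HWC and the biharmonicity conditions of $(ii)$ are in hand, the remaining simplifications in the other two implications are routine but lengthy algebra.
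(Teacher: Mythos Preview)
Your proposal is correct and the logical structure---the chain $(iii)\Rightarrow(i)\Rightarrow(ii)\Rightarrow(iii)$ built on the iterated composition formula---agrees with the paper. The implications $(iii)\Rightarrow(i)$ and $(ii)\Rightarrow(iii)$ are handled exactly as in the paper (the latter is the paper's computation leading to its equation (\ref{ME})), and your extraction of biharmonicity of $\phi$ and of $(\phi^{\alpha}+i\phi^{\beta})^{2}$ from the linear and quadratic harmonic tests is identical.

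Where you genuinely diverge is in deriving horizontal weak conformality from~$(i)$. The paper proceeds sequentially: it first feeds in specific cubic harmonic polynomials $y^{\alpha}y^{\beta}y^{\gamma}$ and $(y^{\alpha})^{3}-3y^{\alpha}(y^{\beta})^{2}$ to obtain identities (\ref{GD1})--(\ref{GD2}) that simplify the third-order block, and only then inserts the quartic tests $(y^{\alpha})^{3}y^{\beta}-(y^{\beta})^{3}y^{\alpha}$ and $(y^{\alpha})^{4}-6(y^{\alpha})^{2}(y^{\beta})^{2}+(y^{\beta})^{4}$ to reach (\ref{GD3})--(\ref{GD4}), whose combination gives HWC. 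Your route bypasses the cubic layer entirely: by evaluating the expansion at a single point $x$ with a homogeneous degree-four harmonic polynomial centred at $\phi(x)$, all lower-order jets of $f$ vanish there and only the symmetrized $B_{(\alpha\beta}B_{\gamma\delta)}$ survives, which must then pair to zero against every trace-free symmetric $4$-tensor; the diagonalization argument finishes. This is cleaner conceptually and avoids the intermediate bookkeeping, while the paper's approach is entirely hands-on and needs no representation-theoretic input (the decomposition of symmetric $4$-tensors into trace-free plus $\delta\odot M$). Your worry about the semi-definite case is unfounded: with $B$ diagonalized to $\mathrm{diag}(b_{1},\dots,b_{n})$, the $(iijj)$ identity reads $2b_{i}b_{j}=b_{i}^{2}+b_{j}^{2}$ regardless of positivity, so $(b_{i}-b_{j})^{2}=0$ and all eigenvalues coincide even if some vanish.
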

\begin{proof}
For a map $\phi: (M^m, g)\longrightarrow (N^n, h)$ between Riemannian manifolds and a function $f$ on $N$, we have, by Lemma 2.5 in \cite{Ou1}, the following bi-Laplacian formula
\begin{eqnarray}\notag
 \Delta^{2} (f\circ\phi)  &=& ( f_{\alpha\beta\gamma\delta}\circ\phi )\left[ g(\nabla\phi^{\alpha}, \nabla\phi^{\beta}) g(\nabla\phi^{\gamma}, \nabla\phi^{\delta}) \right]  \\\label{GDO}
&&+( f_{\alpha\beta\gamma}\circ\phi )\left[ g(\nabla\phi^{\alpha}, \nabla\phi^{\beta}) \Delta\phi^{\gamma}+g(\nabla\phi^{\beta}, \nabla\phi^{\gamma}) \Delta\phi^{\alpha}\right. \\\notag
&&\left.+2g(\nabla g(\nabla\phi^{\alpha}, \nabla \phi^{\beta}), \nabla\phi^{\gamma})\right]\\\notag
&&+(f_{\alpha\beta}\circ\phi )\left[ \Delta g(\nabla\phi^{\alpha}, \nabla\phi^{\beta})+ 2 g(\nabla\phi^{\beta}, \nabla\Delta\phi^{\alpha}) +\Delta\phi^{\alpha}\Delta\phi^{\beta}\right]\\\notag
&&+(f_{\alpha}\circ\phi )\Delta^2 \phi^{\alpha},
\end{eqnarray}
where Einstein summation convention has been used and $f_{\alpha}, f_{\alpha\beta}$ etc denote the partial derivatives $\frac{\partial f}{\partial y^{\alpha}}, \frac{\partial^2 f}{\partial y^{\alpha}\partial y^{\beta}}$ respectively.
Using Lemma 2.4 in \cite{Ou1}, we can rewrite formula (\ref{GDO}) as
\begin{eqnarray}\notag
 \Delta^{2} (f\circ\phi)  &=& \sum_{\alpha=1}^n( f_{\alpha\alpha\alpha\alpha}\circ\phi )|\nabla\phi^{\alpha}|^4+4 \sum_{1\le\alpha\ne \beta\le n}( f_{\alpha\alpha\alpha\beta}\circ\phi )|\nabla\phi^{\alpha}|^2g(\nabla\phi^{\alpha}, \nabla\phi^{\beta})  \\\notag
&&+\sum_{1\le \alpha\ne\beta\le n}( f_{\alpha\beta\alpha\beta}\circ\phi )[2|\nabla\phi^{\alpha}|^2|\nabla\phi^{\beta}|^2+4 (g(\nabla\phi^{\alpha}, \nabla\phi^{\beta}))^2]\\\notag
&&\sum_{1\le \alpha\ne\beta\ne\gamma\le n}( f_{\alpha\alpha\beta\gamma}\circ\phi )\left[ 4g(\nabla\phi^{\alpha}, \nabla\phi^{\alpha}) g(\nabla\phi^{\beta}, \nabla\phi^{\gamma}) \right.\\\notag
&&\left.+ 8g(\nabla\phi^{\alpha}, \nabla\phi^{\beta}) g(\nabla\phi^{\alpha}, \nabla\phi^{\gamma})\right] \\\notag
&&+\sum_{1\le \alpha\ne\beta\ne\gamma\ne\delta\le n}( f_{\alpha\beta\gamma\delta}\circ\phi )\left[ g(\nabla\phi^{\alpha}, \nabla\phi^{\beta}) g(\nabla\phi^{\gamma}, \nabla\phi^{\delta}) \right]  \\\notag
&&+\sum_{1\le \alpha\le n}( f_{\alpha\alpha\alpha}\circ\phi )\left[ 2|\nabla\phi^{\alpha}|^2 \Delta\phi^{\alpha}+2g(\nabla|\nabla\phi^{\alpha}|^2, \nabla\phi^{\alpha})\right]  \\\notag
&&+\sum_{1\le \alpha\ne \beta\le n}( f_{\alpha\alpha\beta}\circ\phi )\left[ 2|\nabla\phi^{\alpha}|^2 \Delta\phi^{\beta}+4g(\nabla\phi^{\alpha}, \nabla\phi^{\beta}) \Delta\phi^{\alpha} \right.\\\label{MF}
&&\left.+2g(\nabla|\nabla\phi^{\alpha}|^2, \nabla\phi^{\beta})+4g(\nabla g(\nabla\phi^{\alpha}, \nabla \phi^{\beta}), \nabla \phi^{\alpha})\right]  \\\notag
&&+\sum_{1\le \alpha\ne\beta\ne\gamma\le n}( f_{\alpha\beta\gamma}\circ\phi )\left[ g(\nabla\phi^{\alpha}, \nabla\phi^{\beta}) \Delta\phi^{\gamma}+g(\nabla\phi^{\beta}, \nabla\phi^{\gamma}) \Delta\phi^{\alpha}\right. \\\notag
&&\left.+2g(\nabla g(\nabla\phi^{\alpha}, \nabla \phi^{\beta}), \nabla\phi^{\gamma})\right]\\\notag
&&+\sum_{1\le \alpha\le n}(f_{\alpha\alpha}\circ\phi )\left[\frac{1}{2} \Delta^2 (\phi^{\alpha}\phi^{\alpha})-\phi^{\alpha}\Delta^2 \phi^{\alpha}\right]\\\notag
&&+\sum_{1\le\alpha<\beta\le n}(f_{\alpha\beta}\circ\phi )\left[ \Delta^2 (\phi^{\alpha}\phi^{\beta})-\phi^{\alpha}\Delta^2 \phi^{\beta}-\phi^{\beta}\Delta^2 \phi^{\alpha}\right]\\\notag
&&+\sum_{1\le \alpha\le n}(f_{\alpha}\circ\phi )\Delta^2 \phi^{\alpha}.
\end{eqnarray}

Let $\phi: (M^m, g)\longrightarrow \r^n$ be a generalized harmonic morphism, and let  $\{ y^{\alpha}\}$ be the standard Cartesian Coordinates on the Euclidean space $\r^n$, then $f(y)=y^{\alpha}$ $ \alpha=1,2, \cdots, n$ is a harmonic function for $ \alpha=1,2, \cdots, n$. Substituting $f(y)=y^{\alpha}$ into Equation (\ref{MF}) we obtain 
\begin{equation}\label{Bi}
\Delta^2 \phi^{\alpha}=0\;\; {\rm  for\; all \; } \alpha=1,2, \cdots, n,
\end{equation}
 so the map $\phi: (M^m, g)\longrightarrow \r^n$ is a biharmonic map.\\

By choosing  harmonic functions $f(y)=(y^{\alpha})^2-(y^{\beta})^2$ and $f(y)=y^{\alpha}y^{\beta}, \;\alpha\ne \beta$ using Equation (\ref{MF}) respectively,  we have
\begin{eqnarray}\notag
&& \Delta^2 (\phi^{\alpha}\phi^{\alpha})-2\phi^{\alpha}\Delta^2 \phi^{\alpha}-[\Delta^2 (\phi^{\beta}\phi^{\beta})-2\phi^{\beta}\Delta^2 \phi^{\beta}]=0,\\\notag
&&\Delta^2 (\phi^{\alpha}\phi^{\beta})-\phi^{\alpha}\Delta^2 \phi^{\beta}-\phi^{\beta}\Delta^2 \phi^{\alpha}=0.
\end{eqnarray}

This, together with Equation (\ref{Bi}), implies that
\begin{equation}\label{Sbi}
\begin{cases}
\Delta^2 (\phi^{\alpha}\phi^{\alpha})-\Delta^2 (\phi^{\beta}\phi^{\beta})=0,\\
\Delta^2 (\phi^{\alpha}\phi^{\beta})=0,
\end{cases}
\end{equation}
which means exactly that the map $(\phi^{\alpha}+i\phi^{\beta})^2: (M^m, g)\longrightarrow \r^2$ is a biharmonic map for any $\alpha\ne \beta$.

Using Equations (\ref{Bi}) and (\ref{Sbi}) we can rewrite Equation (\ref{MF}) as
\begin{eqnarray}\notag
 \Delta^{2} (f\circ\phi)  &=& \sum_{\alpha=1}^n( f_{\alpha\alpha\alpha\alpha}\circ\phi )|\nabla\phi^{\alpha}|^4+4 \sum_{1\le\alpha\ne \beta\le n}( f_{\alpha\alpha\alpha\beta}\circ\phi )|\nabla\phi^{\alpha}|^2g(\nabla\phi^{\alpha}, \nabla\phi^{\beta})  \\\label{MF2}
&&+\sum_{1\le \alpha\ne\beta\le n}( f_{\alpha\beta\alpha\beta}\circ\phi )[2|\nabla\phi^{\alpha}|^2|\nabla\phi^{\beta}|^2+4 (g(\nabla\phi^{\alpha}, \nabla\phi^{\beta}))^2]\\\notag
&&\sum_{1\le \alpha\ne\beta\ne\gamma\le n}( f_{\alpha\alpha\beta\gamma}\circ\phi )\left[ 4g(\nabla\phi^{\alpha}, \nabla\phi^{\alpha}) g(\nabla\phi^{\beta}, \nabla\phi^{\gamma}) \right.\\\notag
	&&\left.+ 8g(\nabla\phi^{\alpha}, \nabla\phi^{\beta}) g(\nabla\phi^{\alpha}, \nabla\phi^{\gamma})\right] \\\notag
&&+\sum_{1\le \alpha\ne\beta\ne\gamma\ne\delta\le n}( f_{\alpha\beta\gamma\delta}\circ\phi )\left[ g(\nabla\phi^{\alpha}, \nabla\phi^{\beta}) g(\nabla\phi^{\gamma}, \nabla\phi^{\delta}) \right]  \\\notag
&&+\sum_{1\le \alpha\le n}( f_{\alpha\alpha\alpha}\circ\phi )B_{\alpha\alpha\alpha}+\sum_{1\le \alpha\ne \beta\le n}( f_{\beta\beta\alpha}\circ\phi )B_{\beta\beta\alpha}\\\notag
&&+\sum_{1\le \alpha\ne\beta\ne\gamma\le n}( f_{\alpha\beta\gamma}\circ\phi )B_{\alpha\beta\gamma}  +\frac{1}{2} [(\Delta f)\circ\phi ] \Delta^2 (\phi^{1}\phi^{1})
\end{eqnarray}
for any function  $f$ on $\r^n$, where
\begin{eqnarray}
B_{\alpha\alpha\alpha}&= &2|\nabla\phi^{\alpha}|^2 \Delta\phi^{\alpha}+2g(\nabla|\nabla\phi^{\alpha}|^2, \nabla\phi^{\alpha})  \\
 B_{\beta\beta\alpha} &=& 2|\nabla\phi^{\beta}|^2 \Delta\phi^{\alpha}+4g(\nabla\phi^{\alpha}, \nabla\phi^{\beta}) \Delta\phi^{\beta} +2g(\nabla|\nabla\phi^{\beta}|^2, \nabla\phi^{\alpha})\\\notag
&&+4g(\nabla g(\nabla\phi^{\alpha}, \nabla \phi^{\beta}), \nabla \phi^{\beta}), \;\;\;1\le \alpha\ne\beta\le n.\\
B_{\alpha\beta\gamma} &=& g(\nabla\phi^{\alpha}, \nabla\phi^{\beta}) \Delta\phi^{\gamma}+g(\nabla\phi^{\beta}, \nabla\phi^{\gamma}) \Delta\phi^{\alpha} \\\notag &&+2g(\nabla g(\nabla\phi^{\alpha}, \nabla \phi^{\beta}),
 \nabla\phi^{\gamma}), \;\; 1\le \alpha\ne\beta\ne\gamma\le n.
\end{eqnarray}

By substituting the harmonic function $f(y)=y^{\alpha}y^{\beta}y^{\gamma}$ with $\alpha\ne\beta\ne\gamma$ into Equation (\ref{MF2}) we have 
\begin{equation}\label{GD1}
B_{\alpha\beta\gamma}=0, \;\;{\rm  for}\;\; 1\le \alpha\ne\beta\ne\gamma\le n.
\end{equation} 

On the other hand, by substituting the harmonic function $f(y)=(y^{\alpha})^3-3y^{\alpha}(y^{\beta})^2$  into Equation (\ref{MF2}) we have,
\begin{eqnarray}\label{GD2}
 B_{\alpha\alpha\alpha}=B_{\beta\beta\alpha}, \;\;{\rm for}\; 1\le \alpha\ne \beta \le n.
\end{eqnarray}
Substituting (\ref{GD1}) and (\ref{GD2}) into Equation (\ref{MF2}) we have
\begin{eqnarray}\notag
 \Delta^{2} (f\circ\phi)  &=& \sum_{\alpha=1}^n( f_{\alpha\alpha\alpha\alpha}\circ\phi )|\nabla\phi^{\alpha}|^4+4 \sum_{1\le\alpha\ne \beta\le n}( f_{\alpha\alpha\alpha\beta}\circ\phi )|\nabla\phi^{\alpha}|^2g(\nabla\phi^{\alpha}, \nabla\phi^{\beta})  \\\label{MF3}
&&\sum_{1\le \alpha\ne\beta\ne\gamma\le n}( f_{\alpha\alpha\beta\gamma}\circ\phi )\left[ 4g(\nabla\phi^{\alpha}, \nabla\phi^{\alpha}) g(\nabla\phi^{\beta}, \nabla\phi^{\gamma}) \right.\\\notag
	&&\left.+ 8g(\nabla\phi^{\alpha}, \nabla\phi^{\beta}) g(\nabla\phi^{\alpha}, \nabla\phi^{\gamma})\right] \\\notag
&&+\sum_{1\le \alpha\ne\beta\ne\gamma\ne\delta\le n}( f_{\alpha\beta\gamma\delta}\circ\phi )\left[ g(\nabla\phi^{\alpha}, \nabla\phi^{\beta}) g(\nabla\phi^{\gamma}, \nabla\phi^{\delta}) \right]  \\\notag
&&+\sum_{1\le \alpha\ne\beta\le n}( f_{\alpha\beta\alpha\beta}\circ\phi )[2|\nabla\phi^{\alpha}|^2|\nabla\phi^{\beta}|^2+4 (g(\nabla\phi^{\alpha}, \nabla\phi^{\beta}))^2]\\\notag
&&+\sum_{ \alpha=1}^n ( \frac{\partial}{\partial y^{\alpha}}\Delta f)\circ\phi )B_{\alpha\alpha\alpha} +\frac{1}{2} [(\Delta f)\circ\phi ] \Delta^2 (\phi^{1}\phi^{1})
\end{eqnarray}
for any function  $f$ on $\r^n$.\\

Choose the harmonic function $f(y)= (y^{\alpha})^3(y^{\beta})-(y^{\beta})^3y^{\alpha}$ and substitute it into Equation (\ref{MF3}), we have
\begin{equation}\label{GD3}
24 (|\nabla \phi^{\alpha}|^2-|\nabla \phi^{\beta}|^2)g(\nabla\phi^{\alpha}, \nabla\phi^{\beta}))=0, \;\;{\rm for}\; 1\le \alpha\ne \beta \le n.
\end{equation}

On the other hand, choosing the harmonic function $f(y)=(y^{\alpha})^4-6 (y^{\alpha})^2(y^{\beta})^2+(y^{\beta})^4$ and substituting it into Equation (\ref{MF3}) yields
\begin{equation}\label{GD4}
24 [(|\nabla \phi^{\alpha}|^2-|\nabla \phi^{\beta}|^2)^2-4(g(\nabla\phi^{\alpha}, \nabla\phi^{\beta})))^2]=0, \;\;{\rm for}\; 1\le \alpha\ne \beta \le n.
\end{equation}

Combining (\ref{GD3}) and (\ref{GD4}) we have 
\begin{equation}\label{HWC}
|\nabla \phi^{\alpha}|^2=|\nabla \phi^{\beta}|^2,\;\; g(\nabla\phi^{\alpha}, \nabla\phi^{\beta})=0, \;\;{\rm for}\; 1\le \alpha\ne \beta \le n
\end{equation}
which means exactly the map $\phi: (M^m, g)\longrightarrow \r^n$ is horizontally weakly conformal.\\

This, together with (\ref{Bi}) and (\ref{Sbi}), completes the proof that a \gh\, $\phi: (M^m, g)\longrightarrow \r^n$ is a horizontally weakly conformal biharmonic map with an additional condition that the map $(\phi^{\alpha}+i\phi^{\beta})^2: (M^m, g)\longrightarrow \mathbb{C}$ is also a biharmonic map for any $\alpha\ne \beta=1,2,\cdots, n$.\\

Conversely, if $\phi: (M^m, g)\longrightarrow \r^n$ is a horizontally weakly conformal biharmonic map with an additional condition that the map $(\phi^{\alpha}+i\phi^{\beta})^2: (M^m, g)\longrightarrow \mathbb{C}$ is also a biharmonic map for any $\alpha\ne \beta=1,2,\cdots, n$, we use (\ref{Bi}), (\ref{Sbi}), and (\ref{HWC}) to rewrite (\ref{MF}) as

\begin{eqnarray}\notag
 \Delta^{2} (f\circ\phi)  &=& \sum_{\alpha=1}^n( f_{\alpha\alpha\alpha\alpha}\circ\phi )|\nabla\phi^{\alpha}|^4  +\sum_{1\le \alpha\ne\beta\le n}( f_{\alpha\beta\alpha\beta}\circ\phi )[2|\nabla\phi^{\alpha}|^2|\nabla\phi^{\beta}|^2]\\\notag
&&+\sum_{1\le \alpha\le n}( f_{\alpha\alpha\alpha}\circ\phi )\left[ 2|\nabla\phi^{\alpha}|^2 \Delta\phi^{\alpha}+2g(\nabla|\nabla\phi^{\alpha}|^2, \nabla\phi^{\alpha})\right]  \\\notag
&&+\sum_{1\le \alpha\ne \beta\le n}( f_{\beta\beta\alpha}\circ\phi )\left[ 2|\nabla\phi^{\beta}|^2 \Delta\phi^{\alpha} +2g(\nabla|\nabla\phi^{\beta}|^2, \nabla\phi^{\alpha})\right] \\\notag
&&+\frac{1}{2} [(\Delta f)\circ\phi ] \Delta^2 (\phi^{1}\phi^{1})\\\label{ME}
&=& \lambda^4(\Delta^2f)\circ\phi+2[\lambda^2\Delta \phi^{\alpha} +g(\nabla\lambda^2,\nabla\phi^{\alpha})](\partial_{\alpha}\Delta f)\circ \phi\\\notag
&&+[\Delta \lambda^2 +2g(\nabla\phi^1,\nabla\Delta\phi^1)+(\Delta\phi^1)^2](\Delta f)\circ \phi,
\end{eqnarray}
where  in obtaining the last equality we have used the horizontal weak conformality $\lambda^2=|\nabla\phi^{\alpha}|^2=|\nabla\phi^{\beta}|^2$ and the identity (see Lemma 2.4 in \cite{Ou1})
\begin{eqnarray}\notag
\Delta^2 (\phi^{\alpha}\phi^{\alpha})&=&\Delta^2 (\phi^{1}\phi^{1})\\\notag
&=& 2\Delta g(\nabla\phi^1, \nabla\phi^1)+4g(\nabla\Delta\phi^1, \nabla\phi^1)+2\phi^1\Delta^2\phi^1+2(\Delta\phi^1)\\\notag
&=&2\Delta \lambda^2 +4g( \nabla\phi^1,\nabla\Delta\phi^1)+2(\Delta\phi^1)^2.
\end{eqnarray}

It follows from Equation (\ref{ME}) that the map $\phi: (M^m, g)\longrightarrow \r^n$ pulls back any harmonic function to a biharmonic function and hence it is a \gh. This completes the proof of the equivalence of Statements (i) and (ii).\\

Finally, the equivalence to Statements (i) and (iii) follows clearly from the definition of a \gh\; and Equation (\ref{CE}). Thus, we obtain the theorem.
\end{proof}

\begin{example}\label{EX1}
Let $M^4=\r^4\setminus\{0\times \r\}$ be the open submanifolds of $\r^4$ with the standard Euclidean metric. Then, the map $\phi:M^4\longrightarrow \r^2$ with $\phi(x_1,\cdots, x_4)=(\sqrt{x_1^2+x_2^2+x_3^2\,}, x_4)$ is a generalized harmonic morphism which is not a harmonic morphism. More precisely, $\phi$ is a Riemannian submersion which is also non-harmonic biharmonic map with additional property that $(\phi(x))^2=(\sqrt{x_1^2+x_2^2+x_3^2\,}+ix_4)^2$ is also a biharmonic map $\r^4\supset M^4\longrightarrow \mathbb{C}$. In fact, we can easily check that
\begin{itemize}
\item[(a)] $|\nabla\phi^1|=|\nabla\phi^2|=1$ and $\langle \nabla\phi^1, \nabla\phi^2\rangle=0$, so $\phi$ is a Riemannian submersion;
\item[(b)] $\Delta \phi^1=\frac{2}{\sqrt{x_1^2+x_2^2+x_3^2\,}},\;\; \Delta^2 \phi^1=0$, and   $\Delta \phi^2= \Delta^2 \phi^2=0$. So $\phi$ is a non-harmonic biharmonic map;
\item[(c)] the map 
\begin{eqnarray}
(\phi(x))^2&=&(\sqrt{x_1^2+x_2^2+x_3^2\,}+ix_4)^2\\\notag
&=&x_1^2+x_2^2+x_3^2-x_4^2+i(2x_4\sqrt{x_1^2+x_2^2+x_3^2}\,)
\end{eqnarray}
 is also a biharmonic map $\r^4\supset M^4\longrightarrow \mathbb{C}$.
\end{itemize}
\end{example}
\begin{remark}
 We would like to point out that there are examples of horizontally conformal biharmonic maps which are not a generalized harmonic morphism, for instance, we know from \cite{BFO} that the map $\phi:\r^3\longrightarrow \r^2$ with 
 \begin{equation}
 \phi(x_1,x_2,x_3)=\left(\frac{(1-\frac{1}{2}|x|^2)x_2+\sqrt{2}x_1x_3}{x_2^2+x_3^2}, \frac{(1-\frac{1}{2}|x|^2)x_3-\sqrt{2}x_1x_2}{x_2^2+x_3^2}\right)
 \end{equation}
is a horizontal weakly conformal biharmonic map. However, we can check that  $(\phi)^2: \r^3\longrightarrow \mathbb{C}$ is not biharmonic, so it is not a generalized harmonic morphism.
\end{remark}

Notice that the concept of harmonic morphisms traced back to  Jacobi who in 1848 tried to solve the Laplace equation 
\begin{equation}
\Delta \phi \equiv \sum_{i=1}^3\frac{\partial^2\phi}{\partial^2x_i}=0
\end{equation}
for $\phi:\r^3\supseteq U\longrightarrow \mathbb{C}$ under the additional condition that $f\circ\phi$ is harmonic for any (holomorphic) complex analytic function $f:\mathbb{C}\supseteq V\longrightarrow \mathbb{C}$.\\

Since the real and imaginary parts of any holomorphic function are harmonic and since the Laplacian is a linear operator on functions, the additional condition  that Jacobi imposed is equivalent to the map $\phi$ pulling back harmonic functions to harmonic functions. With this viewpoint, we can easily have 
 \begin{theorem}\cite{BW1}
 Let $\phi:\r^3\supseteq U\longrightarrow \mathbb{C}$ be a harmonic function. Then $f\circ\phi$ is harmonic for any (holomorphic) complex analytic function $f:\mathbb{C}\supseteq V\longrightarrow \mathbb{C}$ if and only if $\phi$ is a solution of
 \begin{eqnarray} \label{HWC}
\sum_{i=1}^3\left( \frac{\partial \phi}{\partial x_i} \right)^2=0.
\end{eqnarray}
 \end{theorem}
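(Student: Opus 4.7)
The plan is to derive, from the standard chain rule for the Laplacian of a composition, the single complex factorization
\[
\Delta(f\circ\phi) \;=\; (f''\circ\phi)\sum_{i=1}^{3}\left(\frac{\partial\phi}{\partial x_i}\right)^{2}
\]
valid for every holomorphic $f$ under the hypothesis that $\phi$ itself is harmonic. Once this identity is in hand, both directions of the equivalence are immediate. For sufficiency the right-hand side vanishes as soon as $\sum_i(\partial\phi/\partial x_i)^2=0$, making $f\circ\phi$ harmonic for every holomorphic $f$. For necessity, specialize to the test function $f(z)=z^2$, for which $f''(z)=2\ne 0$, and the hypothesis forces $\sum_i(\partial\phi/\partial x_i)^2\equiv 0$.

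To establish the identity I would write $\phi=\phi^1+i\phi^2$ and $f=u+iv$ in real and imaginary parts and apply the standard first-order chain rule
\[
\Delta(u\circ\phi)=\sum_{\alpha,\beta=1}^{2}(u_{\alpha\beta}\circ\phi)\,g(\nabla\phi^\alpha,\nabla\phi^\beta)+\sum_{\alpha=1}^{2}(u_\alpha\circ\phi)\,\Delta\phi^\alpha,
\]
and the analogous formula for $v$, cancelling the last sum in each case by the harmonicity of $\phi$. The Cauchy--Riemann equations $u_{y^1}=v_{y^2}$, $u_{y^2}=-v_{y^1}$ then supply, at the level of second derivatives, $u_{22}=-u_{11}$, $v_{11}=-u_{12}$ and $v_{12}=u_{11}$. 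Substituting these into $\Delta(u\circ\phi)+i\Delta(v\circ\phi)$ and collecting, everything factors as
\[
\Delta(f\circ\phi) \;=\; (u_{11}-iu_{12})\,\bigl[(|\nabla\phi^1|^2-|\nabla\phi^2|^2)+2i\, g(\nabla\phi^1,\nabla\phi^2)\bigr].
\]
Finally, $u_{11}-iu_{12}$ equals $f''\circ\phi$ (this is a short calculation from $\partial/\partial z=\tfrac12(\partial_1-i\partial_2)$ together with $u_{22}=-u_{11}$), and the bracketed factor is exactly the real expansion of $\sum_{i=1}^{3}\bigl(\partial\phi^1/\partial x_i+i\,\partial\phi^2/\partial x_i\bigr)^2=\sum_{i=1}^3(\partial\phi/\partial x_i)^2$.

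There is no genuine obstacle here; the whole argument is a bookkeeping exercise of the same flavor as the derivation of equation (\ref{MF}) in the proof of Theorem \ref{MT}, but much shorter. The only step that deserves a moment of care is the algebraic packaging of the two real identities for $\Delta(u\circ\phi)$ and $\Delta(v\circ\phi)$ into a single complex product, which depends on correctly tracking signs in the Cauchy--Riemann relations at the level of mixed second derivatives. Note in passing that the dimension of the source plays no role: the identical argument gives the same characterization for any harmonic $\phi:(M^m,g)\supseteq U\to\mathbb{C}$ on an arbitrary Riemannian source, so the theorem is really a statement about harmonic $\mathbb{C}$-valued maps rather than a feature of $\r^3$.
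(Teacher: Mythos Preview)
Your argument is correct. The factorization $\Delta(f\circ\phi)=(f''\circ\phi)\sum_i(\partial\phi/\partial x_i)^2$ is derived cleanly, the Cauchy--Riemann bookkeeping is accurate, and the test $f(z)=z^2$ disposes of necessity.

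However, the paper does not actually prove this theorem; it is quoted from \cite{BW1}. The only justification the paper offers is the sentence immediately preceding the statement: since the real and imaginary parts of a holomorphic function are harmonic, and conversely every local harmonic function on $\mathbb{C}$ is locally the real part of a holomorphic function, the condition ``$f\circ\phi$ harmonic for every holomorphic $f$'' is equivalent to ``$\phi$ pulls back harmonic functions to harmonic functions,'' i.e.\ $\phi$ is a harmonic morphism. One then invokes the Fuglede--Ishihara characterization: a harmonic morphism is precisely a harmonic, horizontally weakly conformal map; since $\phi$ is assumed harmonic, only horizontal weak conformality remains, and that is exactly equation~(\ref{HWC}).

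So the two routes differ genuinely. The paper's (implicit) route is conceptual and short but leans on the Fuglede--Ishihara theorem as a black box. Your route is a direct chain-rule computation that is entirely self-contained and, as you observe, works on any Riemannian source $(M^m,g)$ without change. What the paper's approach buys is that it makes the connection to harmonic morphisms transparent; what yours buys is elementariness and an explicit formula $\Delta(f\circ\phi)=(f''\circ\phi)\sum_i(\partial_i\phi)^2$ that could be reused elsewhere.
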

 Note that equation (\ref{HWC}) means exactly the map $\phi:\r^3\supseteq U\longrightarrow \mathbb{C}$ is horizontally weakly conformal.\\
 
Now, following Jacobi's idea and using Theorem \ref{MT}, we have
 \begin{proposition}\label{R3}
 Let $\phi:\r^3\supseteq U\longrightarrow \mathbb{C}$ be a biharmonic function. Then $f\circ\phi$ is biharmonic for any (holomorphic) complex analytic function $f:\mathbb{C}\supseteq V\longrightarrow \mathbb{C}$ if and only if $\phi$ is a solution of
 \begin{eqnarray} \label{HWC1}
\sum_{i=1}^3\left( \frac{\partial \phi}{\partial x_i} \right)^2=0
\end{eqnarray}
and $(\phi)^2: \r^3\supseteq U\longrightarrow \mathbb{C}$ is also a biharmonic function. 
 \end{proposition}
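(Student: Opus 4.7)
The plan is to recognize Proposition \ref{R3} as a complex-variable reformulation of Theorem \ref{MT} in the special case $M=\r^3$ and $N=\r^2\cong\mathbb{C}$. The argument has two ingredients: a dictionary between Jacobi's holomorphic-composition condition and the \gh\ property, and a translation of Theorem \ref{MT}(ii) into the complex form given in the statement.

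For the first ingredient, I would show that \emph{$f\circ\phi$ is biharmonic for every holomorphic $f:\mathbb{C}\supseteq V\to\mathbb{C}$} is equivalent to $\phi$ being a \gh\ in the sense of the above definition. The $(\Rightarrow)$ direction uses the classical fact that on any simply connected open subset of $\mathbb{C}$ every real harmonic $h$ is the real part of some holomorphic $F$; then $F\circ\phi$ is biharmonic by hypothesis, and since the bi-Laplacian is a real linear operator, $h\circ\phi=\mathrm{Re}(F\circ\phi)$ is biharmonic as well. The $(\Leftarrow)$ direction is immediate: for $f=u+iv$ holomorphic, both $u$ and $v$ are harmonic on $V$, hence $u\circ\phi$ and $v\circ\phi$ are biharmonic, and so is $f\circ\phi$.

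For the second ingredient, I would apply Theorem \ref{MT} with $n=2$, writing $\phi=\phi^1+i\phi^2$. The biharmonicity of $\phi$ as a $\mathbb{C}$-valued function (the standing hypothesis of the proposition) is equivalent to each real component $\phi^1,\phi^2$ being biharmonic, which supplies the biharmonicity required by Theorem \ref{MT}(ii). The horizontal weak conformality conditions $|\nabla\phi^1|^2=|\nabla\phi^2|^2$ and $g(\nabla\phi^1,\nabla\phi^2)=0$ can be packaged into a single complex identity
\begin{equation*}
\sum_{i=1}^{3}\left(\frac{\partial\phi}{\partial x_i}\right)^2=\sum_{i=1}^{3}\left[(\partial_i\phi^1)^2-(\partial_i\phi^2)^2\right]+2i\sum_{i=1}^{3}\partial_i\phi^1\,\partial_i\phi^2=0,
\end{equation*}
which is precisely equation (\ref{HWC1}). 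Finally, the additional requirement in Theorem \ref{MT}(ii) that $(\phi^{\alpha}+i\phi^{\beta})^2$ be biharmonic for $\alpha\ne\beta\in\{1,2\}$ collapses to the single condition that $(\phi)^2=(\phi^1+i\phi^2)^2$ be biharmonic, because $(\phi^2+i\phi^1)^2$ has the same imaginary part as $(\phi^1+i\phi^2)^2$ and opposite real part, so one is biharmonic if and only if the other is.

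There is no genuine obstacle; the proof is a direct specialization of Theorem \ref{MT} dressed in complex notation. The only care needed is in verifying the one-to-one correspondence between the harmonic-pullback and holomorphic-composition conditions, and in observing that the pair of ordered-index conditions in Theorem \ref{MT}(ii) reduces, for $n=2$, to the single biharmonicity condition on $(\phi)^2$.
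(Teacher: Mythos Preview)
Your proposal is correct and follows exactly the approach the paper intends: the paper presents Proposition~\ref{R3} without a separate proof, introducing it with ``following Jacobi's idea and using Theorem~\ref{MT}, we have,'' and you have faithfully unpacked what that entails. Your two ingredients---the equivalence between the holomorphic-composition condition and the \gh\ property (via real parts of holomorphic functions), and the translation of Theorem~\ref{MT}(ii) for $n=2$ into the single complex equation (\ref{HWC1}) together with biharmonicity of $(\phi)^2$---are precisely the content implied by the paper, spelled out with appropriate care.
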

 
 Using Theorem \ref{MT} and the fact that a horizontally weakly conformal map between Riemann surfaces are simply weakly conformal maps which are harmonic, we have
 \begin{corollary}\label{CR3}
 (a) Any \gh\, $\phi:(M^2, g) \longrightarrow \r^2$ is a harmonic morphism, and \\
 (b) A map $\phi:\r^3\supseteq U\longrightarrow \mathbb{C}$ is \gh\, if and only if it is a horizontally weakly conformal map and both $\phi$ and $(\phi)^2$ are biharmonic.
 \end{corollary}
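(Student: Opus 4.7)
The plan is to derive both parts directly from Theorem \ref{MT}, supplemented by two standard facts about maps between Riemann surfaces.

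For part (b), I would just specialize Theorem \ref{MT} to the case $n=2$ with the target $\r^2$ identified with $\mathbb{C}$ via $(y^1,y^2)\mapsto y^1+iy^2$. The only pair $(\alpha,\beta)$ with $\alpha\ne\beta$ in $\{1,2\}$ gives the single auxiliary condition that $(\phi^1+i\phi^2)^2=\phi^2$ be biharmonic as a $\mathbb{C}$-valued map, and biharmonicity of the $\mathbb{C}$-valued map $\phi=\phi^1+i\phi^2$ is exactly biharmonicity of both components $\phi^1,\phi^2$. So the equivalence (i)$\Leftrightarrow$(ii) in Theorem \ref{MT} translates verbatim into the stated characterization.

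For part (a), I would again apply Theorem \ref{MT}, now with $m=n=2$. From (i)$\Rightarrow$(ii) a \gh{} $\phi:(M^2,g)\to\r^2$ is horizontally weakly conformal (the biharmonicity of $\phi$ and of $(\phi^1+i\phi^2)^2$ will not even be needed). Since source and target have the same dimension, at any point where $\mathrm{d}\phi_x\ne 0$ horizontal weak conformality forces $\mathrm{d}\phi_x$ to be surjective, hence an isomorphism, and the horizontal subspace coincides with all of $T_xM$; thus $\phi$ is weakly conformal in the usual sense. A weakly conformal map between Riemann surfaces is automatically harmonic (in isothermal coordinates such a map is holomorphic or antiholomorphic, and the Laplacian of the coordinate components vanishes). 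Hence $\phi$ is a horizontally weakly conformal harmonic map, which by the Fuglede--Ishihara characterization is precisely a harmonic morphism.

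There is no real obstacle: part (b) is a direct substitution into Theorem \ref{MT}, and the only non-trivial ingredient in part (a) is the classical fact that weakly conformal maps between Riemann surfaces are harmonic, which is a standard consequence of conformal invariance of the energy functional in dimension two (or, concretely, of working in isothermal coordinates). Once that is quoted, Fuglede--Ishihara closes the argument.
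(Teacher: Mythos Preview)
Your proposal is correct and follows essentially the same approach as the paper. The paper's own justification is the single sentence preceding the corollary: ``Using Theorem \ref{MT} and the fact that a horizontally weakly conformal map between Riemann surfaces are simply weakly conformal maps which are harmonic,'' which is exactly what you spell out---Theorem \ref{MT} gives horizontal weak conformality (and the auxiliary biharmonicity conditions for (b)), and for (a) the classical fact that weakly conformal maps between surfaces are harmonic together with Fuglede--Ishihara finishes the argument.
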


 \begin{remark}
 It was noted in \cite{BW1} that a map $\phi:\r^3\supseteq U\longrightarrow \mathbb{C}$ is a harmonic morphism if and only if both $\phi$ and $(\phi)^2$ are harmonic maps. So the statement of (b) in Corollary \ref{CR3} gives us another viewpoint to see how \gh s generalize the notion of harmonic morphisms.
 \end{remark}

Finally, we close this section by the following classifications of \gh s which are also Riemannian submersions.

\begin{corollary}
 If a Riemannian submersion $\phi:\r^3 \longrightarrow \r^2$ pulls back harmonic functions to biharmonic functions, then it is an orthogonal projection up to an isometry of the domain and/or the target.\\
(ii) There is no  Riemannian submersion $\phi:H^3 \longrightarrow \r^2$ pulls back harmonic functions to biharmonic functions.
\end{corollary}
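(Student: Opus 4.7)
The plan is to reduce both parts to the observation that any \gh\, which is also a Riemannian submersion $\phi=(\phi^1,\phi^2):M^3\to\r^2$ must in fact be a harmonic morphism, and then to separate the two space forms using the Bochner formula.

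By Theorem~\ref{MT} such a $\phi$ is biharmonic with $|\nabla\phi^1|^2=|\nabla\phi^2|^2=1$, $g(\nabla\phi^1,\nabla\phi^2)=0$, together with the condition that $(\phi^1+i\phi^2)^2$ is biharmonic, which by (\ref{Sbi}) amounts to $\Delta^2((\phi^1)^2-(\phi^2)^2)=0$ and $\Delta^2(\phi^1\phi^2)=0$. I would work in the adapted orthonormal frame $\{e_1,e_2,e_3\}$ with $e_\alpha=\nabla\phi^\alpha$ and $e_3$ the unit vertical vector; differentiating the conformality relations in a generic direction then forces the Hessians $A=\mathrm{Hess}(\phi^1)$, $B=\mathrm{Hess}(\phi^2)$ to take the block form
\[
A=\begin{pmatrix}0&0&0\\0&0&b\\0&b&c\end{pmatrix},\qquad B=\begin{pmatrix}0&0&-b\\0&0&0\\-b&0&r\end{pmatrix},
\]
with $c=\Delta\phi^1$ and $r=\Delta\phi^2$. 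Expanding the two auxiliary biharmonicity conditions via the Bochner formula, and using $\Delta^2\phi^\alpha=0$, the Ricci contributions cancel in a constant curvature space because the Ricci tensor is a scalar multiple of $g$; what remains are the two algebraic identities $c^2=r^2$ and $cr=2cr$, which force $c=r=0$. Hence $\phi$ is harmonic, i.e.\ a harmonic morphism.

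For part (i), Ricci vanishes on $\r^3$, so Bochner applied to $\phi^\alpha$ (harmonic, unit gradient) gives $|\mathrm{Hess}(\phi^\alpha)|^2=0$. Each $\phi^\alpha$ is therefore affine, and the orthonormality of the gradients identifies $\phi$ with an orthogonal projection up to a translation and a rotation.

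Part (ii) is the main obstacle. In $H^3$ the same Bochner identity instead yields $|\mathrm{Hess}(\phi^\alpha)|^2=2$, so in the adapted frame $b^2=1$. Reading off the Levi-Civita connection from $A$ and $B$ (one finds, for example, $\nabla_{e_1}e_1=0$, $\nabla_{e_2}e_1=be_3$, $\nabla_{e_1}e_2=-be_3$, $\nabla_{e_3}e_1=be_2$, and consequently $[e_1,e_2]=-2be_3$), a direct calculation produces
\[
R(e_1,e_2)e_1=3b^2 e_2+(e_1 b)e_3.
\]
Comparison with the space-form identity $R(e_1,e_2)e_1=e_2$ valid in $H^3$ forces $3b^2=1$, contradicting $b^2=1$. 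Thus no such Riemannian submersion can exist, proving (ii).
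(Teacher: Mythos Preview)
Your argument is correct and takes a genuinely different route from the paper. The paper's proof is a two-line reduction: Theorem~\ref{MT} shows that a \gh\ is biharmonic, and then the corollary follows by citing the classification of biharmonic Riemannian submersions from $\r^3$ and $H^3$ obtained in \cite{WO}. No further computation is carried out.

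Your approach is self-contained. You use the \emph{full} strength of the \gh\ condition (not just biharmonicity, but also the biharmonicity of $(\phi^1+i\phi^2)^2$) together with the fact that the Ricci tensor is a scalar multiple of the metric to force $c=\Delta\phi^1=0$ and $r=\Delta\phi^2=0$, so that $\phi$ is actually a harmonic morphism. (A minor quibble: the second identity you quote as ``$cr=2cr$'' comes out of the polarized Bochner formula as $\Delta^2(\phi^1\phi^2)=2cr+2(e_1(r)+e_2(c))=2cr-4cr=-2cr$, so the relation is literally $-2cr=0$; your conclusion $cr=0$ is of course the same.) From there your Bochner/curvature analysis in each space form is clean and correct: in $\r^3$ one gets $|\mathrm{Hess}\,\phi^\alpha|^2=0$, hence an orthogonal projection; in $H^3$ one gets $b^2=1$ from Bochner but $3b^2=1$ from $R(e_1,e_2)e_1$, a contradiction.

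The trade-off is that the paper's proof, by invoking \cite{WO}, actually establishes the stronger fact that \emph{every} biharmonic Riemannian submersion from $\r^3$ or $H^3$ to $\r^2$ is of the stated form (respectively, does not exist), whereas your argument only handles those that are \gh s. On the other hand, your proof is elementary and independent of \cite{WO}, and the first step (harmonicity of a \gh\ Riemannian submersion into $\r^2$ from any $3$-dimensional Einstein manifold) is of independent interest.
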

\begin{proof}
If a Riemannian submersion  $\phi:\r^3 \longrightarrow \r^2$ or  $\phi:H^3 \longrightarrow \r^2$ pulls back harmonic functions to biharmonic functions, then it is a \gh, which, by Theorem \ref{MT}, has to be a biharmonic map. It was proved in \cite{WO} that  any  biharmonic Riemannian submersion  $\phi:\r^3 \longrightarrow \r^2$ has to be an orthogonal projection up to an isometry of the domain and/or the target, and that there is no biharmonic Riemannian submersion from $H^3$. Thus, we obtain the corollary.
\end{proof}

A family of proper biharmonic Riemannian submersions was constructed in \cite{LO2}. Later it was proved  in \cite{WO} that  the Riemannian submersion given by the projection of the warped product
\begin{align}\notag
\pi  : ( \mathbb{R}^2 \times \mathbb{R} , dx^2 +
dy^2+\beta^{-2}(x,y) dz^2) &\to (\mathbb{R}^2 ,dx^2 + dy^2) \\\notag
\pi(x,y,z) =(x,y)
\end{align}
 is biharmonic if and only if 
\begin{equation}
\Delta u=\Delta v=0,\;\;{\rm  where} \;\;u=(\ln \beta)_x,\; v=(\ln \beta)_y.
\end{equation}
By assuming $\beta=e^{\int\varphi(x)dx}\,e^{\int\phi(y)dy},$ the authors in \cite{WO} were able to solve for some special solutions for $\beta({x,y})$.\\
\indent Our next theorem shows that, if we add an additional condition that the square map $(\pi)^2$ is also biharmonic (which makes $\pi$  a \gh ),  then the system of equations can be solved completely. Thus, we obtain a complete classification of \gh s among the projections of the warped product, which also provides two families of infinitely many examples of proper biharmonic Rimannian submersions from a warped product space.
\begin{theorem}\label{MT2}
 The Riemannian submersion defined by the projection of the warped product
\begin{align}\notag
\pi  : ( \mathbb{R}^2 \times \mathbb{R} , dx^2 +
dy^2+\beta^{-2}(x,y) dz^2) &\to (\mathbb{R}^2 ,dx^2 + dy^2) \\\notag
\pi(x,y,z) =(x,y)
\end{align}
 is a \gh \; if and only if 
  \begin{eqnarray}
\beta(x, y)=C(x+C_1y+C_2)^{-2},\;\;{\rm or}\;\;\beta(x, y)=C(C_1x+y+C_2)^{-2}
 \end{eqnarray}
 for any constants $C>0, C_1, C_2$.
\end{theorem}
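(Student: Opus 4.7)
The plan is to combine the characterization from Theorem \ref{MT} with the biharmonicity result cited from \cite{WO}. Since $\pi$ is a Riemannian submersion it is automatically horizontally weakly conformal with conformality factor $\lambda \equiv 1$, so by Theorem \ref{MT}, $\pi$ is a \gh\ if and only if both (A) $\pi$ itself is a biharmonic map, and (B) the complex-valued map $(\pi^1 + i\pi^2)^2 = (x^2 - y^2,\, 2xy)$ is a biharmonic map from the warped product into $\mathbb{C}$. By the cited result of \cite{WO}, condition (A) is equivalent to $\Delta u = \Delta v = 0$, where $u = (\ln\beta)_x$, $v = (\ln\beta)_y$, and $\Delta$ denotes the Laplacian on the warped product.

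First I would derive the explicit form of the Laplacian: for a function $f(x,y)$ independent of $z$ one has $\Delta f = f_{xx} + f_{yy} - u\,f_x - v\,f_y$. Applying this to $x^2-y^2$ and to $2xy$ yields $\Delta(x^2-y^2) = -2xu + 2yv$ and $\Delta(2xy) = -2yu - 2xv$. Iterating the Laplacian and using $\Delta u = \Delta v = 0$ to cancel the terms $xuu_x$, $xvu_y$, etc., in pairs, leaves
\begin{equation*}
\Delta^2(x^2-y^2) = 2(u^2-v^2) - 4(u_x-v_y), \qquad \Delta^2(2xy) = 4uv - 4(u_y+v_x).
\end{equation*}
Setting $w = \ln\beta$ so that $u = w_x$, $v = w_y$, the compatibility $u_y = v_x$ is automatic, and condition (B) reduces to the pair of PDEs
\begin{equation*}
2w_{xy} = w_x w_y, \qquad w_x^2 - w_y^2 = 2(w_{xx} - w_{yy}).
\end{equation*}

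The crucial step is the substitution $W := \beta^{-1/2} = e^{-w/2}$. A short calculation gives $W_{xy} = -\tfrac{1}{2}W\bigl(w_{xy} - \tfrac{1}{2}w_x w_y\bigr)$, so the first PDE above becomes $W_{xy} = 0$, forcing the separation $W(x,y) = X(x) + Y(y)$. Rewriting $u = -2X'/W$, $v = -2Y'/W$ and substituting into the second PDE, the terms of order $1/W^2$ cancel and one is left with $X''(x) = Y''(y)$, a function of $x$ alone equalling a function of $y$ alone, hence a common constant $c$. Finally, imposing the biharmonicity constraints $\Delta u = 0$ and $\Delta v = 0$ from (A) on this reduced ansatz yields, after simplification, $c\,X' = 0 = c\,Y'$, so in the non-constant regime one must have $c = 0$, making $X$ and $Y$ both affine: $X(x) = ax + b_1$, $Y(y) = a'y + b_2$.

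Thus $W = ax + a'y + B$, and factoring out $a$ (when $a \ne 0$) gives $\beta = W^{-2} = C(x + C_1 y + C_2)^{-2}$ with $C = a^{-2} > 0$, $C_1 = a'/a$, $C_2 = B/a$, while the case $a = 0$, $a' \ne 0$ leads to $\beta = C(C_1 x + y + C_2)^{-2}$ with $C_1 = 0$. All manipulations are reversible, so substituting either family back verifies $\Delta u = \Delta v = 0$ together with the two PDEs of (B), completing the converse by a direct computation. The main obstacle I anticipate is disentangling the nonlinear coupling in $w_x^2 - w_y^2 = 2(w_{xx}-w_{yy})$; the substitution $W = \beta^{-1/2}$ is not immediately apparent but is strongly suggested by the scale-invariant form of $2w_{xy} = w_x w_y$, and it linearizes that equation into $W_{xy} = 0$, after which separation of variables finishes the argument.
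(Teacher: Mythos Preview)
Your argument is correct and reaches the same classification, but the route is genuinely different from the paper's. Both proofs reduce the problem, via Theorem~\ref{MT} and the cited result from \cite{WO}, to the same four--equation system for $u=(\ln\beta)_x$, $v=(\ln\beta)_y$ (biharmonicity of $\pi$ plus biharmonicity of $x^2-y^2$ and $xy$). From there the paper proceeds by differentiating and recombining the four equations to extract the auxiliary relations $uv_y-vv_x=0$ and $uu_y-vu_x=0$, and then performs a case split on whether $uv=0$; in the nontrivial case this forces $u/v$ to be constant, leading to first-order ODEs for $u$ and $v$ that integrate to the stated $\beta$.

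Your substitution $W=\beta^{-1/2}=e^{-w/2}$ is the key difference: it turns the nonlinear equation $2w_{xy}=w_xw_y$ into the linear $W_{xy}=0$, so separation $W=X(x)+Y(y)$ drops out immediately with no case analysis. The second PDE then collapses to $X''=Y''=c$, and the biharmonicity conditions $\Delta u=\Delta v=0$ give exactly $cX'=cY'=0$, forcing $c=0$ in the nonconstant regime. This is cleaner and avoids the paper's differentiation-and-recombination step; it also treats the paper's Case~I and Case~II uniformly (they correspond simply to whether one of $a,a'$ vanishes). The only point worth making explicit is that your simplified bi-Laplacian formulas for $x^2-y^2$ and $2xy$ already used $\Delta u=\Delta v=0$, so logically you are solving the system $(A)\wedge(B')$ where $(B')$ is $(B)$ simplified under $(A)$; since $(A)\wedge(B)\Longleftrightarrow(A)\wedge(B')$ this is harmless, but it clarifies why re-imposing $(A)$ at the end is not circular.
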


\begin{proof}

As in \cite{WO}, we choose an orthonormal frame
$\{e_1=\frac{\partial}{\partial x},\;e_2= \frac{\partial}{\partial
y}, \;e_3=\beta \frac{\partial}{\partial z}\}$  on  $M=( \mathbb{R}^2
\times \mathbb{R} , dx^2 + dy^2+\beta^{-2}(x,y) dz^2)$ adapted to
the Riemannian submersion $\pi$ with $ d\pi(e_i)=\varepsilon_i, i=1,
2$ and $e_3$ being vertical, where
$\varepsilon_1=\frac{\partial}{\partial x},\;\varepsilon_2=
\frac{\partial}{\partial y},\;$ form an orthonormal frame on the
base space $(\mathbb{R}^2 , dx^2 + dy^2)$. Then, as it was computed in \cite{WO}, we have
\begin{align}\notag
[e_1,e_3]=u e_3,\;[e_2,e_3]=v e_3 ,\;\; [e_1,e_2]=0,
\end{align}
where $u=(\ln \beta)_{x}, v=(\ln \beta)_{y}\;$.\\
The integrability data of the Riemannian submersion
$\pi$ are given by
\begin{equation}\notag
\begin{array}{lll}
f_1=f_2=\sigma=0,\;\kappa_1=u,\;\kappa_2=v.
\end{array}
\end{equation}
 and hence we have
\begin{eqnarray}\label{NA}
&&\nabla_{e_{1}} e_{1}=\nabla_{e_{2}} e_{2}=0,\;\; \nabla_{e_{3}} e_{3}=u e_{1}+v
e_2,
\end{eqnarray}
 where $\nabla$  denote the Levi-Civita connection of  warped product metric on the total space. The tension field of the Riemannian  submersion $\pi$ given by
\begin{equation}\label{RSB6}
\tau(\pi)=\nabla^{\pi}_{e_i}d\pi(e_i)-d\pi(\nabla^{M}_{e_i}e_i)=-d\pi(\nabla^{M}_{e_3}e_3)
=-u\varepsilon_1-v
\varepsilon_2.
\end{equation}

 It follows from Example 1 in \cite{WO} that the Riemannian submersion
$\pi$ is biharmonic if and only if
\begin{equation}\label{D4}
\Delta_{M} u=0,\;\;\;\Delta_{M} v=0.
\end{equation}

It follows from Theorem \ref{MT} that the projection $\pi$ (being a Riemannian submersion)  is a \gh\,  if and only if both $\pi$ and its square map $(\pi(x,y,z))^2=(x+iy)^2=x^2-y^2+2ixy$ are biharmonic maps. These are equivalent to $\beta(x, y)$ solving Equation (\ref{D4}) and 
\begin{equation}\label{D5}
\begin{cases}
\Delta_M^2(x^2-y^2)=0, \\
\Delta_M^2(xy)=0.
\end{cases}
\end{equation}
 A straightforward computation using (\ref{NA}) and (\ref{D4}) gives

 \begin{eqnarray}\label{LU}
 \Delta_{M} u&=&u_{xx}+u_{yy}-uu_x-vu_y,\\\label{LV}
\Delta_{M} v&=&v_{xx}+v_{yy}-uv_x-vv_y\\\notag
\Delta_M(x^2-y^2)&=&-2xu+2yv, \\\label{X}
\Delta^2_M(x^2-y^2)&=&-2(-u^2+2u_x)+2(-v^2+2v_y),\\\notag
\Delta_M(xy)&=&-yu-xv,\\\label{XY}
\Delta_M^2(xy)&=&2(uv-u_y-v_x).
\end{eqnarray}
It follows from (\ref{LU}), (\ref{LV}), (\ref{X}), and (\ref{XY}) that  the map $\pi$ is a \gh\, if and only if
 \begin{eqnarray}\label{1}
u_{xx}+u_{yy}-uu_x-vu_y=0,\\\label{2}
v_{xx}+v_{yy}-uv_x-vv_y=0,\\\label{3}
u^2-2u_x-v^2+2v_y=0,\\\label{4}
uv-u_y-v_x=0.
\end{eqnarray}
To solve this system, we first notice that
\begin{equation}\label{B1}
u_y=(\ln \beta)_{xy}=v_x,
\end{equation}
which, together with (\ref{4}), gives
\begin{equation}\label{B2}
u_y=v_x=\frac{1}{2}uv.
\end{equation}
On the other hand, we differentiate both sides of (\ref{3}) with respect to $x$ and both sides of (\ref{4}) with respect to $y$ to have, respectively
\begin{eqnarray}
2uu_x-2u_{xx}-2vv_x+2v_{xy}=0\\
u_yv+uv_y-u_{yy}-v_{xy}=0.
\end{eqnarray}
Multiplying $2$ to the second and adding the result to the first of the above equations, and using (\ref{1}) we have
\begin{equation}\label{B3}
uv_y-vv_x=0.
\end{equation}
Similarly, differentiating both sides of (\ref{3}) with respect to $y$ and both sides of (\ref{4}) with respect to $x$ gives, respectively
\begin{eqnarray}
2uu_y-2u_{xy}-2vv_y+2v_{yy}=0\\
u_xv+uv_x-u_{xy}-v_{xx}=0.
\end{eqnarray}
Multiplying $-2$ to the second and adding the result to the first of the above equations, and using (\ref{2}) we have
\begin{equation}\label{B4}
uu_y-vu_x=0.
\end{equation}
Now we are ready to solve the system (\ref{1})-(\ref{4}) by considering the following two cases.\\
\indent {\bf Case I:} $uv=0.$  In this case, $u=u(x), v=0$ or $u=0, v=v(y)$ and the system reduces to
\begin{equation}
\begin{cases}
v=0,\\u'-\frac{1}{2}u^2=0,\\(u'-\frac{1}{2}u^2)'=0,\\
\end{cases}
{\rm or}\;\;\;\;\;\;
\begin{cases}
u=0,\\ v'-\frac{1}{2}v^2=0,\\(v'-\frac{1}{2}v^2)'=0.
\end{cases}
\end{equation}
Solving these systems, we have
\begin{equation}\label{Sp}
\beta(x, y)=C(x+C_2)^{-2},\;\;{\rm or}\;\;\beta(x, y)=C(y+C_2)^{-2}.
\end{equation}
\indent {\bf Case II:} $uv\ne0.$ First, we substitute (\ref{B2}) into (\ref{B3}) and (\ref{B4})  to have, respectively,
\begin{eqnarray}
uv_y-vu_y=0, \;\;\; {\rm and}\;\; uv_x-u_xv=0.
\end{eqnarray}
Since $uv\ne0$, we divide both sides of the above equations by $uv$ and rewrite them as
\begin{eqnarray}
\left(\ln \frac{u}{v}\right)_y=0, \;\;\; {\rm and}\;\; \left(\ln \frac{u}{v}\right)_x=0.
\end{eqnarray}
From these we have
\begin{equation}
u=C_1v, \;\;\;{\rm or}\;\;\;v=C_1u
\end{equation}
for some constant $C_1\ne 0$. Substituting $u=C_1v$ into (\ref{B2}) we have
\begin{eqnarray}
\begin{cases}
v_x=\frac{1}{2}C_1v^2,\\
C_1v_y=\frac{1}{2}C_1v^2,
\end{cases}
\end{eqnarray}
which is equivalent to
\begin{eqnarray}
\begin{cases}
v_x=\frac{1}{2}C_1v^2,\\
v_y=\frac{1}{2}v^2.
\end{cases}
\end{eqnarray}
Solving this system we have 
\begin{equation}\label{S1}
u(x,y)=\frac{-2C_1}{C_1x+y+C_2},\;\;v(x,y)=\frac{-2}{C_1x+y+C_2}
\end{equation}
for some constant $C_2$.
Similarly, using $v=C_1u$ Equation (\ref{B2}) have another family of solutions given by
\begin{equation}\label{S2}
u(x,y)=\frac{-2}{x+C_1y+C_2},\;\;v(x,y)=\frac{-2C_1}{x+C_1y+C_2}
\end{equation}
for some constant $C_2$.
A straightforward checking shows that all the solutions in (\ref{S1}) and (\ref{S2}) also solve equations (\ref{1}), (\ref{2}) and (\ref{3}), so they are solutions of the \gh\,  equations.\\
\indent Substituting  $u(x,y), v(x,y)$ given in (\ref{S1}) and (\ref{S2}) into the equation $(\ln \beta)_x=u(x,y), (\ln \beta)_y=v(x,y)$ and solving the resulting equations, we have
\begin{eqnarray}
\beta(x, y)=C(x+C_1y+C_2)^{-2},\;\;{\rm or}\;\;\beta(x, y)=C(C_1x+y+C_2)^{-2}
 \end{eqnarray}
 for any constants $C>0, C_1\ne 0, C_2$. \\
 \indent Finally, notice that if we allow $C_1=0$, then the family of the solutions contains the special solutions given in (\ref{Sp}). Thus, we obtain the theorem.
\end{proof}
\begin{remark}\label{RM3}
(i) Notice that the tension field of the Riemannian submersions given in the theorem   is $\tau(\pi)=u d\pi(e_1)+v d\pi(e_)\ne 0$ for all the solutions given in (\ref{S1}) and (\ref{S2}), so the \gh s provided by our theorem are all proper biharmonic Riemannian submersions.\\
(ii) It is very interesting to note, as one can easily check, that the square map 
\begin{align}\notag
(\pi) ^2: ( \mathbb{R}^2 \times \mathbb{R} , dx^2 +
dy^2+\beta^{-2}(x,y) dz^2) &\to (\mathbb{R}^2 ,dx^2 + dy^2) \\\notag
(\pi)^2(x,y,z) =(x^2-y^2,2xy)
\end{align}
of the \gh s given in our theorem provides two families of infinitely many examples of conformal proper biharmonic submersions from a $3$-dimensional warped product manifolds.
\end{remark}

\section{Some constructions and more examples of \gh s}

In this section, we give two methods  of constructions to produce new \gh s from given ones. One is by composition and the other is by using a direct sum.

Unlike in the case of harmonic or biharmonic morphisms where we have the composition rules stating that the compositions of harmonic morphisms (respectively, biharmonic morphisms) are harmonic morphisms (respectively, biharmonic morphisms), the composition of \gh s are not expected to be \gh s. However, by the definitions of harmonic morphisms, biharmonic morphisms and the generalized harmonic morphisms, we do have the following composition rules which provide methods to produce new \gh s from  given ones by composing it  with a harmonic morphism or a biharmonic morphism.
\begin{corollary}\label{C1}
(I)  If $\phi: (M^m, g)\longrightarrow (N^n, h)$ is a generalized harmonic morphism and $\varphi: (N^n, h)\longrightarrow (Q^l, k)$ is a harmonic morphism, then $\varphi\circ \phi: (M^m, g)\longrightarrow (Q^l, k)$ is a generalized harmonic morphism.\\
(II) If $\phi: (M^m, g)\longrightarrow (N^n, h)$ is a generalized harmonic morphism and $\psi: (Q^l, k)\longrightarrow (M^n, g)$ is a biharmonic morphism, then $\phi\circ \psi: (Q^l, k)\longrightarrow (N^n, h)$ is a generalized harmonic morphism.
\end{corollary}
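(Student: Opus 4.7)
The proof will be a direct unwinding of the three definitions (harmonic morphism, biharmonic morphism, generalized harmonic morphism), all of which are phrased in terms of pullbacks of germs of (bi)harmonic functions. The only real content of the argument is to track, for a germ of harmonic function on the final target, what happens to its pullback under each of the two constituent maps, and to use associativity of composition. No new analytic input is needed beyond the characterizations already recalled in the paper.

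For Part (I), the plan is as follows. Fix any local harmonic function $f : Q^{l} \supseteq W \longrightarrow \mathbb{R}$ whose preimage under $\varphi\circ\phi$ is nonempty; set $U = \varphi^{-1}(W) \subseteq N^{n}$ and $V = \phi^{-1}(U) = (\varphi\circ\phi)^{-1}(W) \subseteq M^{m}$. Since $\varphi$ is a harmonic morphism, the composition $f\circ\varphi : U \longrightarrow \mathbb{R}$ is a harmonic function on $U$. Since $\phi$ is a generalized harmonic morphism, it pulls back the harmonic function $f\circ\varphi$ to a biharmonic function on $V$; by associativity this pullback equals $f\circ(\varphi\circ\phi)$. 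Hence $\varphi\circ\phi$ pulls back the arbitrary harmonic function $f$ to a biharmonic function, which is exactly the definition of a generalized harmonic morphism.

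For Part (II), the bookkeeping is symmetric but uses the pullbacks in the opposite order. Fix any local harmonic function $f : N^{n} \supseteq U \longrightarrow \mathbb{R}$, and set $V = \phi^{-1}(U) \subseteq M^{m}$ and $W = \psi^{-1}(V) = (\phi\circ\psi)^{-1}(U) \subseteq Q^{l}$. Since $\phi$ is a generalized harmonic morphism, the pullback $f\circ\phi : V \longrightarrow \mathbb{R}$ is a biharmonic function on $V$. Since $\psi$ is a biharmonic morphism, it pulls back biharmonic functions to biharmonic functions, so $(f\circ\phi)\circ\psi = f\circ(\phi\circ\psi) : W \longrightarrow \mathbb{R}$ is biharmonic. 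As $f$ was an arbitrary local harmonic function on $N$, this shows that $\phi\circ\psi$ is a generalized harmonic morphism.

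The main obstacle is essentially notational rather than mathematical: one must verify that the relevant open sets $U$, $V$, $W$ are the correct preimages and that they are nonempty where required, so that the statements about harmonicity and biharmonicity make sense locally. This is immediate. The heart of the argument is simply that the defining diagram
\[
\{\text{harmonic}\} \xrightarrow{\;\varphi^{*}\;} \{\text{harmonic}\} \xrightarrow{\;\phi^{*}\;} \{\text{biharmonic}\}
\]
for Part (I), and
\[
\{\text{harmonic}\} \xrightarrow{\;\phi^{*}\;} \{\text{biharmonic}\} \xrightarrow{\;\psi^{*}\;} \{\text{biharmonic}\}
\]
for Part (II), commutes up to the associativity identity $(f\circ g)\circ h = f\circ(g\circ h)$, giving in each case a pullback taking germs of harmonic functions into germs of biharmonic functions.
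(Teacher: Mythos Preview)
Your proof is correct and matches the paper's approach: the paper does not write out a formal proof, stating only that the corollary follows directly from the definitions of harmonic morphisms, biharmonic morphisms, and generalized harmonic morphisms, which is precisely the argument you spell out.
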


\begin{remark}
Notice that the concept of \gh s into Riemann surfaces are well defined. Recall that a Riemann surface is an orientable surface with a conformal class of Riemannian metrics. By (I) of Corollary \ref{C1}, if $\phi: (M^m, g)\longrightarrow (N^2,h)$ is a generalized harmonic morphism, then $\phi: (M^m, g)\longrightarrow (N^2,\sigma^2h)$ is also a generalized harmonic morphism since the map can be viewed as the composition $(M^m, g)\,\underrightarrow{\;\;\;\phi\;\;} \, (N^2,h)\,\underrightarrow{\;\;\;id\;\;} \,(N^2,\sigma^2h)$ and the identity map is a harmonic morphism. It follows that the concept of generalized harmonic morphisms into Riemann surfaces is well defined.
\end{remark}

\begin{example}\label{EX10}
Let  $\phi:\r^4\longrightarrow \r^2$ with $\phi(x_1,\cdots, x_4)=(\sqrt{x_1^2+x_2^2+x_3^2\,}, x_4)$ be the generalized harmonic morphism  given in Example \ref{EX1}. Let $\sigma^{-1}: \r^2\longrightarrow S^2$ be the inverse of the stereographic projection. Then, by Corollary \ref{C1}, the composition $\sigma^{-1}\circ\phi:\r^4\longrightarrow S^2$ is a generalized harmonic morphism.  In particular, it is a  proper biharmonic map from $\r^4$ into $2$-sphere. 
\end{example}

\begin{example}\label{EX2}
Let $\psi: \r^4\longrightarrow \r^4$ be the inversion $\psi(x)=\frac{x}{|x|^2}$ which is a biharmonic morphism by \cite{LO2}. Let $\phi:\r^4\longrightarrow \r^2$ with $\phi(x_1,\cdots, x_4)=(\sqrt{x_1^2+x_2^2+x_3^2\,}, x_4)$ be the  generalized harmonic morphism given in Example \ref{EX1}. Then, by ( II ) of Corollary \ref{C1}, the composition $(\phi\circ \psi):\r^4\longrightarrow \r^2$ with 
\begin{equation}
(\phi\circ \psi)(x)=\left(\frac{\sqrt{x_1^2+x_2^2+x_3^2\,}}{|x|^2}, \frac{x_4}{|x|^2}\right)
\end{equation}
 is a \gh\; with dilation $\lambda=1/|x|^2$. In particular, it is a horizontally weakly conformal biharmonic map. Also, by Theorem \ref{MT}, the map 
 \begin{equation}
 \left(\frac{\sqrt{x_1^2+x_2^2+x_3^2\,}}{|x|^2}+i \frac{x_4}{|x|^2}\right)^2= \left(\frac{x_1^2+x_2^2+x_3^2-x_4^2}{|x|^4}+2i \frac{x_4\sqrt{x_1^2+x_2^2+x_3^2\,}}{|x|^4}\right)
 \end{equation}
  is also a biharmonic map $\r^4\longrightarrow \mathbb{C}$.
\end{example}

\begin{example}\label{EX3}
The map $ \varphi: \r^4\longrightarrow \r^3$ given by
\begin{equation}\label{Hopf}
\varphi(x)=\left(\frac{x_1^2+x_2^2-x_3^2-x_4^2}{|x|^2},\; \frac{2x_1x_3-2x_2x_4}{|x|^2},\;\frac{2x_1x_4+2x_2x_3}{|x|^2}\right).
\end{equation}
is a \gh. To see this, let $\psi: \r^4\longrightarrow \r^4$ be the inversion $\psi(x)=\frac{x}{|x|^2}$ which is a biharmonic morphism by \cite{LO2}. Let $\phi:\r^4\longrightarrow \r^3$ be the Hopf map defined by
\begin{eqnarray}
\phi(x_1,\cdots, x_4)=(x_1^2+x_2^2-x_3^2-x_4^2, \;2x_1x_3-2x_2x_4,\; 2x_1x_4+2x_2x_3)
\end{eqnarray}
 Then, the map $\varphi$ is exactly the composition $\varphi=\phi\circ \psi$. It follows from Corollary \ref{C1} that   $\varphi=\phi\circ \psi$ is a \gh\,  since it is a composition of a biharmonic morphism and a (generalized) harmonic morphism.
We notice that  the map $\varphi$ is a non-harmonic  horizontally weakly conformal biharmonic map with dilation $\lambda=2/|x|^{3/2}$  is known in \cite{BFO}. 
\end{example}

\begin{example}\label{EX4}
Let $\varphi:\r^4\longrightarrow \r^3$ denote the \gh\;  in Example \ref{EX3} defined by (\ref{Hopf}). Then, by composing $\varphi$ with an orthogonal projection $\r^3\longrightarrow \r^2\equiv\mathbb{C}$, we have three different \gh s \\$\varphi^{\alpha}+i \varphi^{\beta}: \r^4\longrightarrow \mathbb{C}$ for any $\alpha\ne \beta=1,2,3$. 
\end{example}

Recall that a direct sum of two maps $\psi: (M^m, g)\longrightarrow \r^k,\;\varphi: (N^n, h)\longrightarrow \r^k$ is defined (see \cite{Ou0}) to be the map $\psi\oplus\varphi: (M^m\times N^n, g\times h) \longrightarrow \r^k$ with
\begin{equation}
(\psi\oplus\varphi)(x, y)=\psi(x)+\varphi(y).
\end{equation}

The direct sums of maps have been used ( see \cite{Ou0}, \cite{BW1} and  \cite{Ou2}) to construct examples of harmonic morphisms and biharmonic maps. For example, it follows from \cite{Ou0} and \cite{Ou2} that the direct sum of horizontally weakly conformal biharmonic maps is again a horizontally weakly conformal  biharmonic map. However, one can easily check that,  in general, the direct sum of two \gh s is not a \gh. Nevertheless, we have the following proposition which gives a method to produce many examples of \gh s.

\begin{proposition}
Let  $\psi: (M^m, g)\longrightarrow \r^k$  be a \gh  and and $\varphi: (N^n, h)\longrightarrow \r^k$ a harmonic morphism. Then, their direct sum $\psi\oplus\varphi: (M^m\times N^n, G=g\times h) \longrightarrow \r^k$ is  a \gh. In particular, $\psi\oplus\varphi$ is a horizontally weakly conformal proper biharmonic map.
\end{proposition}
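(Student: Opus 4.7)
The plan is to verify the three conditions in statement (ii) of Theorem \ref{MT} for $\Phi := \psi\oplus\varphi$, whose components are $\Phi^{\alpha}(x,y)=\psi^{\alpha}(x)+\varphi^{\alpha}(y)$. The key tool is the product-structure identity on the Riemannian product $(M\times N, G=g\times h)$: gradients of functions depending only on the first factor are $G$-orthogonal to those depending only on the second, so for separable products one has $\Delta_{G}(a(x)\,b(y)) = b(y)\,\Delta_{M}a(x) + a(x)\,\Delta_{N}b(y)$. Almost everything in the proof reduces to iterating this identity.

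Horizontal weak conformality of $\Phi$ comes first: the $G$-orthogonal decomposition $\nabla\Phi^{\alpha}=\nabla\psi^{\alpha}+\nabla\varphi^{\alpha}$, combined with the horizontal weak conformality of $\psi$ (from Theorem \ref{MT}) and of the harmonic morphism $\varphi$ (Fuglede--Ishihara), yields $|\nabla\Phi^{\alpha}|_{G}^{2}=\lambda_{\psi}^{2}+\lambda_{\varphi}^{2}$ independent of $\alpha$, together with $G(\nabla\Phi^{\alpha},\nabla\Phi^{\beta})=0$ for $\alpha\neq\beta$. Biharmonicity of $\Phi$ follows immediately: $\Delta_{G}\Phi^{\alpha}=\Delta_{M}\psi^{\alpha}+\Delta_{N}\varphi^{\alpha}=\Delta_{M}\psi^{\alpha}$ by harmonicity of $\varphi$, and then $\Delta_{G}^{2}\Phi^{\alpha}=\Delta_{M}^{2}\psi^{\alpha}=0$ by the biharmonicity of $\psi$ guaranteed by Theorem \ref{MT}.

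The central step will be to show that $(\Phi^{\alpha}+i\Phi^{\beta})^{2}$ is biharmonic for every $\alpha\neq\beta$, equivalently that both $(\Phi^{\alpha})^{2}-(\Phi^{\beta})^{2}$ and $\Phi^{\alpha}\Phi^{\beta}$ are biharmonic. I would expand each into pure $\psi$-, pure $\varphi$-, and mixed $\psi\varphi$-terms and dispatch the three classes separately. The pure $\psi$-terms are annihilated by $\Delta_{G}^{2}=\Delta_{M}^{2}$ thanks to equations \eqref{Sbi} in the proof of Theorem \ref{MT}, which is exactly the square-map condition for the \gh\ $\psi$. The pure $\varphi$-terms $(\varphi^{\alpha})^{2}-(\varphi^{\beta})^{2}$ and $\varphi^{\alpha}\varphi^{\beta}$ are pullbacks by $\varphi$ of the harmonic polynomials $(y^{\alpha})^{2}-(y^{\beta})^{2}$ and $y^{\alpha}y^{\beta}$ on $\r^{k}$, so the harmonic-morphism property of $\varphi$ makes them harmonic on $N$, hence trivially biharmonic. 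The mixed terms $\psi^{\gamma}\varphi^{\delta}$ are disposed of by two applications of the separable product identity: first $\Delta_{G}(\psi^{\gamma}\varphi^{\delta})=\varphi^{\delta}\,\Delta_{M}\psi^{\gamma}$ (using $\Delta_{N}\varphi^{\delta}=0$), and then $\Delta_{G}^{2}(\psi^{\gamma}\varphi^{\delta})=\varphi^{\delta}\,\Delta_{M}^{2}\psi^{\gamma}+\Delta_{M}\psi^{\gamma}\cdot\Delta_{N}\varphi^{\delta}=0$.

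The main subtlety — more than a real obstacle — is organizational: one must notice that the argument uses $\varphi$ at the full harmonic-morphism level of strength, since it is precisely the harmonic-morphism property of $\varphi$ that kills the pure-$\varphi$ degree-two terms, and harmonicity of $\varphi$ that kills the mixed terms. This explains why the hypotheses are asymmetric, and why one cannot generally replace $\varphi$ by a second \gh\ without losing these cancellations. Once the three conditions of Theorem \ref{MT}(ii) are in hand, that theorem produces the \gh\ conclusion, and the ``in particular'' clause is automatic since every \gh\ is horizontally weakly conformal and biharmonic.
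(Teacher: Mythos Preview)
Your proof is correct and follows essentially the same route as the paper: both verify the three conditions of Theorem~\ref{MT}(ii) by expanding $(\psi\oplus\varphi)^{2}$ into pure-$\psi$, pure-$\varphi$, and mixed terms and killing each separately using, respectively, the square-map condition \eqref{Sbi} for $\psi$, the harmonic-morphism property of $\varphi$, and the harmonicity/biharmonicity of the factors. The only cosmetic differences are that the paper first reduces to $k=2$ and works in complex notation, and it cites \cite{Ou0}, \cite{Ou2} for the horizontal weak conformality and biharmonicity of the direct sum rather than re-deriving them as you do.
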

\begin{proof}
By Theorem \ref{MT}, it suffices to prove the statement for the case of $\r^k=\mathbb{C}$. Suppose that  $\psi: (M^m, g)\longrightarrow \mathbb{C}$ is a \gh, and $\varphi: (N^n, h)\longrightarrow\mathbb{C}$ is a harmonic morphism. It was proved in \cite{Ou0} that the direct sum of two harmonic morphisms is a harmonic morphism, in particular, the direct sum of two horizontally weakly conformal maps with dilations $\lambda_1(x)$ and $\lambda_2(y)$ is again a horizontally weakly conformal map with dilation $\lambda(x, y)=\lambda_1(x)+\lambda_2(y)$. On the other hand, it was proved in \cite{Ou2} that the direct sum of two biharmonic maps is a biharmonic map. It follows from Theorem \ref{MT} that to prove the direct sum  $\psi\oplus\varphi: (M^m\times N^n, G=g\times h) \longrightarrow \mathbb{C}$ is a \gh, it is enough to check that the map  $(\psi\oplus\varphi)^2: (M^m\times N^n, G=g\times h) \longrightarrow \mathbb{C}$ is a biharmonic map. Notice that
\begin{eqnarray}\notag
(\psi\oplus\varphi)^2(x, y)= (\psi(x)+\varphi(y))^2=(\psi(x))^2+(\varphi(y))^2+2\psi(x)\varphi(y).
\end{eqnarray}
On the other hand, since the product metric $G=g\times h$ is used on the product manifold  $M\times N$, we have $\Delta^2_G=(\Delta_g+\Delta_h)^2$. It follows that
\begin{eqnarray}\notag
\Delta^2_G(\psi\oplus\varphi)^2 &=& \Delta^2_g(\psi(x))^2+\Delta^2_h(\varphi(y))^2\\\notag
&&+2[\varphi(y)\Delta^2_g\psi(x) +\psi(x)\Delta^2_h\varphi(y)+2(\Delta_g\psi(x))\Delta_h\varphi(y)]=0,
\end{eqnarray}
where in obtaining the last equality, we have used the assumptions that $\psi$ is a \gh\, and $\varphi$ is a harmonic morphism.
\end{proof}

\begin{example}
Let  $\phi:\r^4\longrightarrow \r^2$ with $\phi(x_1,\cdots, x_4)=(\sqrt{x_1^2+x_2^2+x_3^2\,}, x_4)$ be the generalized harmonic morphism  given in Example \ref{EX1}, and let $\varphi :\mathbb{C}\longrightarrow \mathbb{C}$ be any holomorphic function which is well known to be a harmonic morphism. then $\psi\oplus\varphi: \r^4\times \mathbb{C} \longrightarrow \mathbb{C}$ with  $(\psi\oplus\varphi)(x_1,\cdots, x_4, z)=\sqrt{x_1^2+x_2^2+x_3^2\,}+i\,x_4+\varphi(z)$ is a \gh, and in particular, it is a horizontally weakly conformal proper biharmonic map.
\end{example}

\section{Maps between Riemannian manifolds that pull back biharmonic functions to harmonic functions}

The generalized harmonic morphism we studied in this paper are maps between Riemannian manifolds that pull back harmonic functions to biharmonic functions. One may wonder if there is any map between Riemannian manifolds that pulls back biharmonic functions to harmonic functions. In this concluding section, we prove that any such map is a constant map. More precisely, we have
\begin{theorem}
There exists no non-constant map between between Riemannian manifolds
that pulls back germs of biharmonic functions to germs of harmonic
functions.
\end{theorem}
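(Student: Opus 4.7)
My plan is to show first that the hypothesis forces $\phi$ to be a harmonic morphism, and then to derive a contradiction unless $\phi$ is constant by exhibiting a single local biharmonic function on $N$ against which the hypothesis must fail.

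The first step rests on a trivial observation: every harmonic function is biharmonic, so the hypothesis ``pulls back biharmonic germs to harmonic germs'' in particular implies ``pulls back harmonic germs to harmonic germs.'' By the Fuglede--Ishihara characterization, $\phi: (M^m,g) \to (N^n,h)$ is therefore a harmonic morphism, that is, a horizontally weakly conformal harmonic map with some dilation $\lambda: M \to [0,\infty)$, and it satisfies the standard composition formula
\[
\Delta_M(f\circ\phi) \;=\; \lambda^{2}\,(\Delta_N f)\circ\phi
\]
for every smooth function $f$ defined on an open subset of $N$.

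Suppose for a contradiction that $\phi$ is non-constant. Then $\lambda\not\equiv 0$ on $M$, so I may pick a regular point $x_0\in M$ at which $\lambda(x_0)>0$ and near which $\phi$ is a submersion. By openness of submersions, $\phi$ maps a neighborhood $U$ of $x_0$ onto an open neighborhood $V$ of $y_0:=\phi(x_0)$; fix a small geodesic ball $B\subset V$ centered at $y_0$.

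The concluding step is to produce a single biharmonic function on $B$ whose Laplacian does not vanish at $y_0$. Using standard local solvability of the Poisson equation, I would solve $\Delta_N f = 1$ on $B$ (for instance as the Dirichlet problem with $f|_{\partial B}=0$), obtaining $f\in C^\infty(\overline{B})$. Since $\Delta_N f\equiv 1$ is harmonic, $\Delta_N^{\,2}f\equiv 0$, so $f$ is biharmonic on $B$. By the hypothesis, $f\circ\phi$ must be harmonic on $\phi^{-1}(B)\cap U$, yet the composition formula above yields
\[
\Delta_M(f\circ\phi)(x_0) \;=\; \lambda(x_0)^{2}\cdot 1 \;=\; \lambda(x_0)^{2} \;>\; 0,
\]
the desired contradiction. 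This forces $\phi$ to be constant.

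The argument is essentially one line once the harmonic morphism reduction is noticed; the only genuinely substantive ingredient is local solvability of $\Delta_N f=1$ on a small geodesic ball, which is standard. If one prefers to avoid invoking elliptic PDE, one can instead construct $f$ explicitly in harmonic coordinates on $N$ so that $f$ has a prescribed non-zero value of $\Delta_N f$ at $y_0$ while $\Delta_N^{\,2}f\equiv 0$, and this suffices equally well.
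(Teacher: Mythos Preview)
Your argument is correct and follows essentially the same route as the paper's proof: both first observe that harmonic functions are biharmonic so that $\phi$ must be a harmonic morphism with the Fuglede--Ishihara formula $\Delta_M(f\circ\phi)=\lambda^2(\Delta_N f)\circ\phi$, and then test against a locally defined biharmonic function with $\Delta_N f$ equal to a nonzero constant to force $\lambda\equiv 0$. The only cosmetic difference is that the paper cites the existence of such ``quasi-harmonic'' functions from \cite{NS}, whereas you produce one by solving the Poisson equation on a small ball.
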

\begin{proof}
Let $\varphi : (M^m ,g) \longrightarrow (N^n ,h)$  be a  map between
Riemannian manifolds. If $\varphi$ pulls back germs of
 biharmonic functions to germs of harmonic functions, then it must
 pull back germs of harmonic functions (which are a special subset of biharmonic functions) to germs of harmonic
 functions. Therefore, $\varphi$ is a harmonic morphism by definition (see
\cite{Fu}, \cite{Is}, also,
 \cite{BW1}). It follows from \cite{Fu} and \cite{Is} that
\begin{equation}\label{L1}
\Delta_{M}(f\circ \varphi)=\lambda^2(\Delta_{N} f)\circ \varphi
\end{equation}
for any (locally defined) function $f$ on $N$. Take a $f$ to be
quasi-harmonic function (see e.g.,  \cite{NS}) i.e., a special biharmonic
function satisfying $\Delta_{N} f=C$, a nonzero constant, then $f\circ
\varphi$ would be harmonic by the assumption that $\varphi$ pulls
back a biharmonic  function to a harmonic function. It follows from
(\ref{L1}) that $0=\Delta_{M}( f\circ \varphi)=\lambda^2(\Delta
_{N}f)\circ \varphi=C\lambda^2$, which is possible only if the map
is a constant map. Thus, we obtain the theorem.
\end{proof}
\begin{ack}
Both authors would like to thank Paul Baird for some comments that help to improve the manuscript.
\end{ack}

\end{document}